\newtheorem{theorem}{Theorem}[section]
\newtheorem{lemma}[theorem]{Lemma}
\theoremstyle{definition}
\newtheorem{definition}[theorem]{Definition}
\newtheorem{algorithm}{Algorithm}
\theoremstyle{remark}
\numberwithin{equation}{section}
\newcommand{\norm}[2][{}]{\ensuremath{\left \lVert #2 \right \rVert}_{#1}}
\DeclareMathOperator*{\argmin}{argmin}
\newcommand{\trc}{{\rm trc}}
\newcommand{\bsd}{\boldsymbol{d}}
\newcommand{\bsx}{\boldsymbol{x}}
\newcommand{\bsy}{\boldsymbol{y}}
\newcommand{\bsl}{\boldsymbol{l}}
\newcommand{\bsk}{\boldsymbol{k}}
\newcommand{\bszero}{\boldsymbol{0}}
\newcommand{\HH}{\mathcal{H}}
\newcommand{\LL}{\mathcal{L}}
\newcommand{\FF}{\mathbb{Z}}
\newcommand{\de}{\mathrm{e}}
\newcommand{\icomp}{\mathrm{i}}
\newcommand{\rd}{\mathrm{d}}
\newcommand{\nat}{\mathbb{N}}
\newcommand{\complex}{\mathbb{C}}
\newcommand{\NN}{\mathbb{N}}
\newcommand{\DD}{\mathcal{D}}
\newcommand{\KK}{\mathcal{K}}
\newcommand{\CC}{\mathcal{C}}
\newcommand{\wal}{\mbox{wal}}
\newcommand{\xx}{\mathbf{x}}
\newcommand{\landau}{\mathcal{O}}
\newcommand{\Order}{\mathcal{O}}
\begin{document}

\title[Discrete Walsh Transform]{A Multivariate Fast Discrete Walsh Transform with an Application to Function Interpolation}

\author{Kwong-Ip Liu}
\address{Department of Mathematics, Hong Kong Baptist University, Kowloon Tong, Hong Kong, P.\ R.\ China}
\curraddr{}
\email{kiliu@math.hkbu.edu.hk}
%\thanks{This research was supported in part by a grant from the Research Grants Council of the 
%Hong Kong Special Administrative Region, China (Project No. HKBU200605)}

%    author two information
\author{Josef Dick}
\address{School of Mathematics and Statistics, University of New South Wales, Sydney 2052, Australia}
\curraddr{}
\email{josi@maths.unsw.edu.au}
%\thanks{}

\author{Fred J. Hickernell}
\address{Department of Applied Mathematics, Illinois Institute of Technology, Room E1-208, 10 W. 32nd Street, Chicago, IL 60616, USA}
\curraddr{}
\email{hickernell@iit.edu}
%\thanks{This research was supported in part by the Hong Kong Research Grants Council grant HKBU/2009/04P and the United States National Science Foundation grant NSF-DMS-0713848.}
\thanks{This research was supported in part by the Hong Kong Research Grants Council grant HKBU/2009/04P, the HKSAR Research Grants Council Project No.\ HKBU200605 and the United States National Science Foundation grant NSF-DMS-0713848.}

%    \subjclass is required.
\subjclass[2000]{42C10,41A15}
%Fourier series in special orthogonal functions (Legendre polynomials, Walsh functions, etc.); Spline approximation 
\date{\today}

\dedicatory{}

\begin{abstract}
For high dimensional problems, such as approximation and integration, one cannot afford to sample on a grid because of the curse of dimensionality.  An attractive alternative is to sample on a low discrepancy set, such as an integration lattice or a digital net. This article introduces a multivariate
fast discrete Walsh transform for data sampled on a digital net that
requires only $\Order(N \log N)$ operations, where $N$ is the number
of data points. This algorithm and its inverse are digital analogs of multivariate fast Fourier transforms. 

This fast discrete Walsh transform and its inverse may be used to
approximate the Walsh coefficients of a function and then construct a spline interpolant of the function.  This interpolant may then be used to estimate the function's effective dimension, an important concept in the theory of numerical multivariate integration. Numerical results for various functions are presented.
\end{abstract}

\maketitle

\section{Introduction}

The idea of the fast Fourier transforms goes back to Gauss and has been
popular ever since the seminal work of Cooley and Tukey \cite{CT} .
Let $f$ be a function from $[0,1]$ to the complex numbers. The task
is to compute
$$
\widetilde{f}(k) = \sum_{n=0}^{N-1} f(n/N) \de^{2\pi \icomp k n/N}
\quad
\mbox{ for } k = 0,\ldots, N-1.
$$ 
These are $N$ sums, each
consisting of $N$ summands. Hence a straight forward calculation
would have complexity of $\landau(N^2)$ operations. But the sums
have a certain structure which can be exploited. Indeed Cooley and Tukey showed that those sums can be
computed with $\landau(N \log N)$ operations. (There is some
dependence of the implied constant on the number $N$; the algorithm works best if $N$ is a prime power, see \cite{CT}).

In higher dimensions an effect commonly referred to as
{\it the curse of dimensionality} occurs. Let $f : [0,1]^s
\rightarrow \complex$ and consider the discrete Fourier transform
$$
\widetilde{f}(\bsk) =
\sum_{n_1,\ldots,n_s = 0}^{p-1} f(n_1/p,\ldots, n_s/p) \de^{2\pi
\icomp (n_1 k_1 + \cdots + n_s k_s)/p}
$$ 
for all $\bsk =(k_1,\ldots, k_s) \in \{0,\ldots, p-1\}^s$. Here the number of
points sampled is $N = p^s$. Hence if $s$ is large, say $100$ for example,
then choosing even $p = 2$ yields $N=2^{100} \approx 10^{30}$ points, making
such a computation infeasible for contemporary computers.

In the example above the design or set of sample points is a grid aligned with the coordinate axes, $\{(n_1/p,\ldots,n_s/p): 0 \le n_j < p\}$. To avoid the curse of dimensionality one needs a much smaller point set that is constructed differently. Such point sets have previously been considered in the context
of numerical integration, see \cite{N92,SJ94}. Two popular
construction methods are integration lattices (see \cite{SJ94}) and digital
nets and sequences (see \cite{N92}). This article focuses
on the latter family of points.  (Numerical approximation using lattice rule designs and an FFT has been treated
in \cite{LH,ZengLeungHickernell2004}.) The first examples of digital sequences were given by
Sobol~\cite{sob67} and Faure~\cite{fau82} before Niederreiter
introduced the general concept of $(t,m,s)$-nets and
$(t,s)$-sequences.  See \cite{nie05} for a recent survey. These
constructions yield extremely well distributed point sets if the
quality parameter $t$ is small. Digital $(t,m,s)$-nets are a special
construction of $(t,m,s)$-nets, and in the same way, digital
$(t,s)$-sequences are a special construction of $(t,s)$-sequences.  Digital constructions are introduced below.

\begin{definition}
Let $\FF_p$ be a finite field of prime order $p$, let
$C_1,\ldots,C_s$ be $s$ $m\times m$-matrices over $\FF_p =
\{0,1,\ldots, p-1\}$. The digital $(t,m,s)$-net $P(\CC) =
\{\bsx_0,\ldots, \bsx_{p^m-1} \}$, based on $\CC = (C_1,\ldots,C_s)$,
is then defined as follows: let $0 \le n < p^m$ and $n = n_0 + n_1 p +
\cdots + n_{m-1} p^{m-1}$ be the base $p$ representation of $n$.
Define $\vec{n} = (n_0,\ldots, n_{m-1})^T \in \FF_p^m$ and let
\begin{equation}
\label{defy} \vec{y}_{j,n} = C_j \vec{n} \in
\FF_p^m.
\end{equation}
Express $\vec{y}_{j,n}$ as $(y_{j,n,1},\ldots, y_{j,n,m})^T \in \FF_p^m$, and
then define
$$x_{j,n} = y_{j,n,1} p^{-1} + \cdots + y_{j,n,m} p^{-m}.$$ The
$n$-th point $\bsx_n$ of the digital net $P(\CC)$ over the finite
field $\FF_p$ is given by $\bsx_n = (x_{1,n},\ldots,
x_{s,n})$.
\end{definition}

The $t$ value is a non-negative integer such that for all $0 \le
d_1,\ldots, d_s \le m-t$ with $d_1 + \cdots + d_s = m-t$ the
system of vectors $c^{(1)}_1,\ldots, c^{(1)}_{d_1}, \ldots,
c^{(s)}_1,\ldots, c^{(s)}_{d_s}$ is linearly independent over
$\FF_p$. Here $c^{(j)}_k$ refers to the $k$-th row of the matrix
$C_j$. For a geometrical interpretation of the $t$ value see, for
example, \cite{N92}.  Smaller values of $t$ characterize more uniformly distributed nets.

Digital nets are often used in conjunction with certain wavelets,
namely Haar functions, first used by Sobol~\cite{sob69},
and Walsh functions, first used by Larcher~\cite{L93} and Larcher
and Traunfellner~\cite{LT94}.

Next, Walsh functions, which are piecewise constant, in base $p$ are defined. For more information on Walsh functions see for example
\cite{chrest, walsh} (or in the context of numerical integration see \cite{DP03a}). Throughout this article let $\nat_0$
denote the set of non-negative integers and let $\NN$ denote the set
of positive integers.
\begin{definition}
Let $p \ge 2$ be an integer. For a non-negative integer wavenumber $k$ with
base $p$ representation
\[
    k = k_0 + k_1 p + \cdots + k_{a-1} p^{a-1},
\]
with $k_i \in \FF_p$, the Walsh function $_p\wal_{k}:[0,1)
\longrightarrow \complex$ is defined by
\[
   _p\wal_{k}(x) := \omega_p^{x_1 k_0 + \cdots + x_a
k_{a-1}},
\] 
where $\omega_p = \de^{2\pi \icomp/p}$,
for $x \in [0,1)$ with base $p$ representation $x =
x_1/p+x_2/p^2+\cdots $ (unique in the sense that
infinitely many of the $x_i$ must be different from $p-1$). If it
is clear which base $p$ is chosen we simply write $\wal_{k}$.
\end{definition}

\begin{definition}
For dimension $s \geq 2$, $x_1, \ldots, x_s \in [0,1)$ and $k_1,
\ldots, k_s \in \nat_0$ define $_p\wal_{k_1,\ldots, k_s} : [0,1)^s
\longrightarrow \complex$ by
\[
    _p\wal_{k_1,\ldots,k_s}(x_1,\ldots,x_s) := \prod_{j=1}^s\,
_p\wal_{k_j}(x_j).
\]
For wavenumber vectors $\bsk = (k_1, \ldots, k_s) \in \nat_0^s$ and $\bsx =
(x_1,\ldots,x_s) \in [0,1)^s$ we write
\[
    _p\wal_{\bsk}(\bsx) :=\, _p\wal_{k_1,\ldots,k_s}(x_1,\ldots,x_s).
\]
Again, if it is clear which base is meant we simply write
$\wal_{\bsk}(\bsx)$.
\end{definition}

Let a Walsh series $f \in \LL_2([0,1]^s)$ be defined by  $$f(\bsx) =
\sum_{\bsk \in \nat_0^s} \widehat{f}(\bsk) \wal_{\bsk}(\bsx),$$
where the Walsh coefficients are given by
\begin{equation}\label{eq_walshcoeff}
\widehat{f}(\bsk) = \int_{[0,1]^s} f(\bsx)
\overline{\wal_{\bsk}(\bsx)} \rd \bsx,
\end{equation}
since the Walsh functions are mutually orthonormal.  Here $\overline{a}$ denotes the complex conjugate of a complex
number $a$. 

The aim now is to approximate Walsh coefficients of a function $f$ with wavenumbers lying in a
certain set of wavenumbers, $\KK(\CC)$, depending on the digital net $P(\CC)$ defined by the
generating matrices $\CC = (C_1,\ldots, C_s)$.  The details
of how $\KK(\CC)$ is chosen is explained in the next section. A
digital net with $p^m$ points can be used to estimate $|\KK(\CC)| = p^m$ Walsh coefficients, where $|\cdot|$ denotes the cardinality of a set. For $\bsk
\in \KK(\CC)$ we approximate $\widehat{f}(\bsk)$ by the finite sum
\begin{equation}\label{eq_dwalshcoeff}
\widetilde{f}(\bsk) = \frac{1}{p^m} \sum_{\bsx \in P(\CC)} f(\bsx)
\overline{\wal_{\bsk}(\bsx)}.
\end{equation}
We call $\widetilde{f}(\bsk)$ the discrete Walsh coefficients
because \eqref{eq_dwalshcoeff} is just a discrete version of
\eqref{eq_walshcoeff}. Those discrete Walsh coefficients provide us
with valuable information about the function at hand.

A naive calculation of the $p^m$ discrete Walsh coefficients
$\widetilde{f}(\bsk)$ with $\bsk \in \KK(\CC)$ would require
$\landau(p^{2m})$ operations, but using the fast discrete Walsh
transform algorithm described in the following section, we can reduce it to $\landau(m p^{m} \log p)$
operations. In Section~\ref{sec_int} the discrete
Walsh coefficients are used to interpolate functions based on observations on the digital net design.  The inverse discrete Walsh
transform then provides an interpolatory approximation of the original function. The ANOVA decomposition of this interpolation provides information about the effective dimension of the
function as explained in Section~\ref{sec_anova}. In the last
section the fast discrete Walsh transform is used to
approximate Walsh coefficients and effective dimensions of some explicit test functions.

\section{Multivariate fast discrete Walsh transform over digital nets}

In this section we introduce an FFT-like algorithm for a
multivariate fast discrete Walsh transform over digital nets. An
essential role is played by the dual net of a digital net.

\subsection{The Dual Net}

Let $\CC = (C_1,\ldots,C_s)$ be the vector of  matrices generating 
a digital $(t,m,s)$-net $P(\CC)$ over a finite field $\FF_p$, where
$p$ is prime. For any wavenumber vector $\bsk= (k_1,\ldots,k_s) \in \nat_0^s$, we define
$$\CC \cdot \bsk = C_1^T \vec{k}^{(m)}_1 + \cdots +
C_s^T\vec{k}^{(m)}_s \in
\FF_p^m,$$ where for $k_j = k_{j,0} + k_{j,1} p +
\cdots$ we define $\vec{k}^{(m)}_j = (k_{j,0},\ldots,k_{j,m-1})^T \in
\FF_p^m$ as the $m$-element truncation of $\vec{k}_j$,
and all operations are carried out in the finite field $\FF_p$. The
dual net $\DD(\CC)$ of the
digital net $P(\CC)$ is the set of all wavenumbers which make this dot product zero:
$$\DD(\CC) = \{\bsk \in \nat_0^s :
\CC \cdot \bsk = \vec{0} \}.$$ 
The dual net appears in the
worst-case error for multivariate integration in certain Walsh
spaces, see \cite{DP03a}. Therein the worst-case error is just the
sum of a certain function 
over all elements in the dual net except $\bszero$.

The dual net satisfies
\begin{equation}\label{char} \frac{1}{p^m} \sum_{\bsx \in P(\CC)}
\wal_{\bsk}(\bsx) = \left\{\begin{array}{ll} 1 & \mbox{if } \bsk \in
\DD(\CC) \\
0 & \mbox{otherwise},
\end{array}\right. \end{equation}
which was shown, for example, in \cite{DP03a}. Indeed, this property
could also
be used to define the dual net.

For $k \in \nat_0$ with $k = k_0 + k_1 p + \cdots$ let $\nu(0) =
0$ and for $k > 0$ let $\nu(k) = 1 + \max\{i : k_i \neq 0\}$.
For $\bsk = (k_1,\ldots,k_s) \in \nat_0^s$ define $\nu(\bsk) =
\sum_{i=1}^s \nu(k_i)$ (see \cite{N86, NP01}). The function $\nu$
is a norm on the elements in the wavenumber space. It depends on
the most significant bit of the coordinates and can be related to
the $t$-value of a digital net, see \cite{NP01}.

For non-negative integers $k,l \in \nat_0$ with $k = k_0 + k_1 p +
\cdots$ and $l = l_0 + l_1 p + \cdots$ a digit-wise addition and
subtraction in base $p$ can be defined by $k \oplus l = a_0 + a_1 p
+ \cdots$ where $a_i \equiv k_i + l_i \pmod{p}$ and $k \ominus l =
b_0 + b_1 p + \cdots$ where $b_i \equiv k_i - l_i \pmod{p}$. For
non-negative integer vectors the digit-wise addition and subtraction
are defined component-wise.

Using this digit-wise addition and subtraction we obtain a group
structure on $\NN_0^s$ of which the dual net $\DD(\CC)$ forms a
subgroup. It can be checked that the cosets of the subgroup
$\DD(\CC)$ are given by $$\DD(\vec{h}) = \{\bsk \in \nat_0^s : \CC
\cdot \bsk = \vec{h}\}$$ for $\vec{h} \in \FF_p^m$ and hence there
are $p^m$ cosets. Note that $\DD(\vec{0}) = \DD(\CC)$, i.e., the coset containing $\vec{0}$ is the dual net. The set of wavenumbers whose Walsh coefficients are to be approximated,
$\KK(\CC)$, is then obtained by choosing exactly one representative
in each coset. For each $\vec{h} \in \FF_p^m$ identify $\bsk \in
\DD(\vec{h})$ such that $\nu(\bsk) \le \nu(\bsl)$ for all $\bsl \in
\DD(\vec{h})$.  This $\bsk$ is the representative of $\DD(\vec{h})$ chosen to be in $\KK(\CC)$.  In the case of more than one $\bsk$ from the same coset satisfying this condition, one may choose, for example, the $\bsk$ that is the smallest in lexicographic order.  That is,  
\begin{multline*}
\KK(\CC) = \left \{ \bsk \in \nat_0^s : \bsk \in \DD(\vec{h}) \text{ for some } \vec{h} \in \FF_p^m, \text{ and for any } \bsl \ \in
\DD(\vec{h}) \right. \\ \left. \text{ we have } \nu(\bsk) \le \nu(\bsl) 
\text{ and if } \nu(\bsk) = \nu(\bsl) \text{ for some } \bsl \in \DD(\vec{h}), \text{ then } \right. \\ \left. k_1 = l_1, \ldots, k_{j-1} = l_{j-1}, k_j < l_j \text{ for some } j=1, \ldots, s. \right\}
\end{multline*}
This definition implies that the zero vector
$\bszero$ is automatically in $\KK(\CC)$, and that $\nat_0^s$ is the direct sum of $\KK(\CC)$ and $\DD(\CC)$.

\subsection{Multivariate fast discrete Walsh transform over digital nets}

Let a Walsh series $f \in \LL_2([0,1]^s)$ be given by  $$f(\bsx) =
\sum_{\bsk \in \nat_0^s} \widehat{f}(\bsk) \wal_{\bsk}(\bsx).$$ For all $\bsk \in \KK(\CC)$ one may approximate $\widehat{f}(\bsk)$ by the discrete Walsh transform (DWT), $\widetilde{f}(\bsk)$, which is defined as 
\begin{equation*}
\widetilde{f}(\bsk) = \frac{1}{p^m} \sum_{\bsx \in P(\CC)} f(\bsx)
\overline{\wal_{\bsk}(\bsx)} = \sum_{\bsl \in \nat_0^s}
\widehat{f}(\bsl) \frac{1}{p^m} \sum_{\bsx \in P(\CC)}
\wal_{\bsl}(\bsx) \overline{\wal_{\bsk}(\bsx)}.
\end{equation*}
Note that $\wal_{\bsl}(\bsx) \overline{\wal_{\bsk}(\bsx)} =
\wal_{\bsl\ominus \bsk}(\bsx)$ (see \cite{DP03a}), and hence the
rightmost sum in the equation above is one if $\bsl \ominus \bsk \in
\DD(\CC)$ and zero otherwise. Thus it follows that
\begin{equation}\label{eq_aliasing}
\widetilde{f}(\bsk) = \widehat{f}(\bsk) + \sum_{\bsl \in
\DD(\CC)\setminus \{\bszero\}} \widehat{f}(\bsk \oplus \bsl).
\end{equation}
Hence, the terms $\widehat{f}(\bsk \oplus \bsl)$ are completely aliased
with each other for all $\bsl \in \DD(\CC)$. We have chosen $\bsk
\in \KK(\CC)$ such that $\bsk$ is closest to $\bszero$.  Hence if higher frequency contributions are sufficiently
small, that is, $\widehat{f}(\bsk)$ decays sufficiently fast the
further $\bsk$ is away from $\bszero$ with respect to the norm
$\nu$, then $\widetilde{f}(\bsk) \approx \widehat{f}(\bsk)$.

A straightforward calculation of the discrete Walsh coefficients would require $\landau(p^{2m})$ operations, as we have $p^m$ sums
to compute (one sum for each $\bsk \in \KK(\CC)$) and each sum
requires $\landau(p^m)$ operations. But as shown below, certain
parts in the summation above can be reused and thereby reducing
the number of operations.

Let $\bsx_n$ be the $n$-th point of the digital net $P(\CC)$ and
let $\bsk$ be the unique element of $\DD(\vec{h}) \cap \KK(\CC)$. Then we have
$$\wal_{\bsk}(\bsx_n) = \omega_p^{\vec{n} \cdot \vec{h}},$$ because
$$\sum_{j=1}^s \sum_{i=1}^\infty y_{j,n,i} k_{j,i} = (\vec{k}_1^T
C_1 + \cdots + \vec{k}_s^T C_s) \vec{n} = \vec{h}^T \vec{n}.$$
Hence $$\widetilde{f}(\bsk) = \frac{1}{p^m} \sum_{n=0}^{p^m-1}
f(\bsx_n) \omega_p^{-\vec{n} \cdot \vec{h}},$$ where $\vec{h} = \CC
\cdot \bsk$. The above sum may be written as $$\widetilde{f}(\bsk) =
\frac{1}{p^m} \sum_{n_{m-1}=0}^{p-1} \omega_p^{-n_{m-1}h_{m-1}}
\cdots \sum_{n_1=0}^{p-1} \omega_p^{-n_1 h_1} \sum_{n_0=0}^{p-1}
f(\bsx_n) \omega_p^{-n_0 h_0}.$$ Now, computing first the
innermost sum for each $h_0 = 0,\ldots,p-1$, then the second
innermost sum for each $h_1 = 0,\ldots, p-1$ and so on yields an
algorithm which needs only $\landau(m p^{m+1})$ operations. The
details are given as follows.

\begin{algorithm}[Fast Discrete Walsh transform (FWT)]\label{alg_fwt}
For $n_0,\ldots, n_{m-1} \in \FF_p$ we define
$G^{(0)}(n_0,\ldots,n_{m-1}) = f(\bsx_{n_0+\cdots +
n_{m-1}p^{m-1}})$. Then for $r = 1,2,\ldots, m$ compute for all
$n_r,\ldots, n_{m-1} \in \FF_p$ and all $h_0,\ldots,h_{r-1} \in
\FF_p$ the sums
\begin{multline} \label{eqn:fdwt}
G^{(r)}(h_0,\ldots,h_{r-1},n_r,\ldots,n_{m-1})\\ =
\sum_{n_{r-1}=0}^{p-1} \omega_p^{-n_{r-1} h_{r-1}}
G^{(r-1)}(h_0,\ldots,
h_{r-2},n_{r-1},n_r,\ldots,n_{m-1}).
\end{multline}
For $\bsk \in \KK(\CC)$ with $\CC \cdot \bsk = \vec{h}$ let
$$\widetilde{f}(\bsk) = \frac{1}{p^m} G^{(m)}(h_0,\ldots, h_{m-1}).$$
\end{algorithm}

Note that in each step one needs $\landau(p^{m+1})$ operations, and
as there are $m$ steps, one needs $\landau(m p^{m+1})$ operations
altogether.  Note also that the number of terms in the summation in the right side of (\ref{eqn:fdwt}) is $p$, which is a prime number. The index $n_{r-1} = 1,\ldots,p-1$ (and $h_{r-1}$) forms a group under the multiplication modulo $p$. Thus, following the ideas of Rader's algorithm \cite{Rader}, we can rewrite the indices as $n_{r-1} = g^\alpha  \mod p$ and $h_{r-1} = g^\beta  \mod p$, where $g$ is a primitive root of this group, and $\alpha = 0,\ldots,p-2$, $\beta = 0,\ldots,p-2$. By applying Rader's algorithm, we can further reduce the total number of steps to $\landau(m p^{m} \log p)$ \cite{Rader}.

\section{Function Interpolation}\label{sec_int}

In this section we consider multivariate spline interpolation over
digital nets using the discrete Walsh coefficients described in the previous section. Multivariate spline interpolation over lattice rules
was considered in \cite{ZengLeungHickernell2004}. See also
\cite{whaba} for more information on properties of splines.

\subsection{Reproducing kernel Walsh space}

Before we introduce the interpolation algorithm we introduce
reproducing kernel Hilbert spaces based on Walsh functions, see
\cite{DP03a}.  In the following we define the weighted Hilbert space
$\mathcal{H}_{K}$ based on Walsh functions.

Consider the set of functions
\[
    \mathcal{H}_{0,K} = \left\{f: f(\bsx) = \sum_{i=0}^{n'-1}
\alpha_i K(\bsx,\bsx'_i) : n' \in \nat_0, \ \alpha_i \in \mathbb{R}, \{\bsx'_i\}_{i=0}^{n'-1} \subset [0,1)^s\right\},
\]
defined in terms of a symmetric, positive definite kernel
function $K:[0,1)^{2s} \to \complex$. The kernel allows us to define an inner product on $\mathcal{H}_{0,K}$ as
\[
    \langle f,g \rangle = \sum_{i=0}^{n_f-1} \sum_{j=0}^{n_g -1}
\alpha_i \beta_j K(\bsx'_i,\bsy'_j)
\]
for any two functions
$f = \sum_{i=0}^{n_f-1} \alpha_i K(\cdot,\bsx'_i)$ and $g = \sum_{i=0}^{n_g-1}
\beta_i K(\cdot,\bsy'_i)$.  The linear space $\mathcal{H}_{0,K}$ may then be completed to obtain a Hilbert space, $\mathcal{H}_{K}$, for which $K$, is the reproducing kernel, see \cite{aron},
\[
K(\cdot, \bsy) \in \mathcal{H}_{K}, \quad f(\bsy) = \langle K(\cdot, \bsy),f \rangle \qquad \forall \bsy \in [0,1)^s, \forall f \in \mathcal{H}_{K}.
\]

The kernel functions considered here are based on Walsh
functions and may be written as
\begin{equation*}
K(\bsx,\bsy) = K(\bsx \ominus \bsy,\boldsymbol{0}) = \sum_{\bsk\in \nat_0^s}
\widehat{K}(\bsk) \mathrm{wal}_{\bsk}(\bsx \ominus
\bsy),
\end{equation*}
where positive definiteness is ensured by requiring $\widehat{K}(\bsk)$ to be real and non-negative for all $\bsk \in \nat_0^s$.  The finiteness of this kernel is ensured by requiring that $\widehat{K}$ be summable.
The inner product for the Hilbert space defined by this kernel may be written as an $\ell_2$ inner product in the spectral domain:
\begin{equation*}
    \langle f,g\rangle_{\mathcal{H}_K} = \sum_{\bsk\in \nat_0^s}
            \frac{\widehat{f}(\bsk) 
\overline{\widehat{g}(\bsk)} }
{\widehat{K}(\bsk)} = \left \langle  \frac{\widehat{f}}
{\sqrt{\widehat{K}}},  \frac{\widehat{g} }
{\sqrt{\widehat{K}}} \right \rangle_2
\end{equation*}
where $\widehat{f}$ and $\widehat{g}$ are the
Walsh coefficients of $f$
and $g$. The accompanying norm is $\|f\|_{\mathcal{H}_K} =
\langle f,f\rangle_{\mathcal{H}_K}^{1/2}$.

\subsection{Interpolation of functions in the Walsh space}

We now interpolate functions in $\mathcal{H}_K$ using a linear
combination of the reproducing kernel function where the second variable is fixed, $K(\cdot, \bsx_n)$, $n = 0,\ldots, p^m-1$, and where the $\bsx_n$ are points taken from a digital net, $\mathcal{P}(\mathcal{C})$. Given the values of
a function $f(\bsx_n), n=0,\ldots,p^m-1$, one can approximately recover it by a spline, defined as
\begin{equation*}
Sf(\bsx) = \sum_{n=0}^{p^m-1} c_n K(\bsx,\bsx_n),
\end{equation*}
where the $c_n$ are the coefficients to be found by interpolation: $f(\bsx_n) = Sf(\bsx_n)$ for $n
= 0,\ldots, p^m-1$. This translates into solving the linear system
\begin{equation*}
f(\bsx_n) = \sum_{v=0}^{p^m-1} c_v K(\bsx_n,\bsx_v) \quad \mbox{for
} n = 0,\ldots, p^m-1,
\end{equation*}
for the coefficients $c_0,\ldots, c_{p^m-1}$ given the  $f(\bsx_n)$ and $K(\bsx_n,\bsx_v)$. In the following paragraphs we show that the
$c_n$ can be computed in  $\Order(m p^{m} \log p)$
operations. See \cite{zin87} for an analogue for lattice rules in
the context of Fredholm integral equations of the second kind.

First observe that for any $\bsk \in \mathcal{K}(\mathcal{C})$ the DWT of the function data is
\begin{align}\label{eq_ctilde}
\widetilde{f}(\bsk) & =  \frac{1}{p^m} \sum_{n=0}^{p^m -1}
f(\bsx_n) \overline{\mathrm{wal}_{\bsk}(\bsx_n)} =  \frac{1}{p^m} \sum_{n=0}^{p^m -1} Sf(\bsx_n)
\overline{\mathrm{wal}_{\bsk}(\bsx_n)} \nonumber \\
% & =  \frac{1}{p^m} \sum_{n=0}^{p^m -1} \sum_{j=0}^{p^m-1} c_j
%K(\bsx_n,\bsx_j)
%\overline{\mathrm{wal}_{\bsk}(\bsx_n)} \nonumber \\
 & =  \frac{1}{p^m} \sum_{\bsl\in \nat_0^s}
\widehat{K}(\bsl) \sum_{r=0}^{p^m-1} c_r
\overline{\mathrm{wal}_{\bsk}(\bsx_r)} \sum_{n=0}^{p^m
-1} \mathrm{wal}_{\bsl}(\bsx_n)
\overline{\mathrm{wal}_{\bsk}(\bsx_n)} \nonumber \\
 & =  p^m \sum_{\bsd\in
\DD(\CC)}\widehat{K}(\bsk \oplus \bsd)
\widetilde{c}(\bsk) =  p^m \widetilde{K}(\bsk) \widetilde{c}(\bsk),
\end{align}
where $\widetilde{c}(\bsk) = p^{-m} \sum_{n=0}^{p^m-1} c_n
\overline{\wal_{\bsk}(\bsx_n)}$ is the DWT of the coefficients, $(c_n)_{n=0}^{p^m-1}$, and $\widetilde{K}(\bsk) =
\sum_{\bsl \in \DD(\CC)} \widehat{K}(\bsk \oplus \bsl)$ is the DWT of the kernel data, $(K(\bsx_n,\bszero))_{n=0}^{p^m-1}$.

This implies that the DWT of the coefficients is essentially the quotient of the DWTs of the function data and the kernel data:
\begin{equation} \label{splinecoef}
\widetilde{c}(\bsk) =
\frac{\widetilde{f}(\bsk)}{p^m \widetilde{K}(\bsk)}.
\end{equation}
Using the FWT algorithm we can compute all
$(\widetilde{f}(\bsk))_{\bsk \in \KK(\CC)}$ and $(\widetilde{K}(\bsk))_{\bsk \in \KK(\CC)}$, and hence $(\widetilde{c}(\bsk))_{\bsk \in \KK(\CC)}$ in $\Order(mp^{m} \log p)$ operations.  To compute $c_0,\ldots, c_{p^m-1}$ requires the inversion of the DWT, which is introduced in the next section.

\subsection{Fast inverse discrete Walsh transform}

The following lemma gives the key to the inverse discrete Walsh
transform over a digital net.

\begin{lemma} \label{idwtlem}
Let $P(\CC)=\{\bsx_0, \ldots, \bsx_{p^m-1}\}$ be a digital net with $p^m$ points. Let $c_0,\ldots,
c_{p^m-1}$ be arbitrary complex numbers and let
$(\widetilde{c}(\bsk))_{\bsk \in \KK(\CC)}$, denote the DWT of
$(c_n)_{n=0}^{p^m-1}$. Then for $n = 0,\ldots, p^m-1$ the coefficients are
\begin{equation*}
c_n = \sum_{\bsk \in \KK(\CC)} \widetilde{c}(\bsk) \mathrm{wal}_{\bsk}(\bsx_n).
\end{equation*}
\end{lemma}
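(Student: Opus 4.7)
The plan is to substitute the definition of $\widetilde{c}(\bsk)$ into the claimed formula, swap the order of summation, and reduce the interior sum to a character sum over $\FF_p^m$. First I would write
\begin{align*}
\sum_{\bsk \in \KK(\CC)} \widetilde{c}(\bsk) \wal_{\bsk}(\bsx_n)
&= \frac{1}{p^m} \sum_{\bsk \in \KK(\CC)} \sum_{r=0}^{p^m-1} c_r \overline{\wal_{\bsk}(\bsx_r)}\wal_{\bsk}(\bsx_n)\\
&= \frac{1}{p^m} \sum_{r=0}^{p^m-1} c_r \sum_{\bsk \in \KK(\CC)} \wal_{\bsk}(\bsx_n) \overline{\wal_{\bsk}(\bsx_r)},
\end{align*}
so the claim reduces to showing $\sum_{\bsk \in \KK(\CC)} \wal_{\bsk}(\bsx_n) \overline{\wal_{\bsk}(\bsx_r)} = p^m \delta_{n,r}$.

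The key step exploits the identity $\wal_{\bsk}(\bsx_n) = \omega_p^{\vec{n}\cdot \vec{h}}$ for $\vec{h} = \CC \cdot \bsk$, which was derived earlier in the paper. By the definition of $\KK(\CC)$ as a system of coset representatives for $\DD(\CC)$ in $\nat_0^s$, the map $\bsk \mapsto \CC \cdot \bsk$ is a bijection from $\KK(\CC)$ onto $\FF_p^m$. Therefore the inner sum rewrites as
\begin{equation*}
\sum_{\bsk \in \KK(\CC)} \wal_{\bsk}(\bsx_n) \overline{\wal_{\bsk}(\bsx_r)} = \sum_{\vec{h} \in \FF_p^m} \omega_p^{\vec{h}\cdot (\vec{n}-\vec{r})} = \prod_{i=0}^{m-1}\sum_{h_i=0}^{p-1}\omega_p^{h_i(n_i-r_i)}.
\end{equation*}
Each factor equals $p$ when $n_i = r_i$ and $0$ otherwise, so the product is $p^m$ when $\vec{n}=\vec{r}$ (equivalently $n=r$, since $n \mapsto \vec{n}$ is a bijection between $\{0,\dots,p^m-1\}$ and $\FF_p^m$) and vanishes otherwise. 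Plugging back in, only the $r=n$ term survives, giving $c_n$, as desired.

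The main (mild) obstacle is the bijectivity of $\bsk \mapsto \CC\cdot \bsk$ on $\KK(\CC)$: the preceding discussion constructs $\KK(\CC)$ precisely so that there is exactly one representative $\bsk$ in each coset $\DD(\vec h)$, $\vec h \in \FF_p^m$, which is exactly what is needed here. Everything else is just a character-orthogonality computation on the additive group $\FF_p^m$.
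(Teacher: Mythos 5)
Your proposal is correct and follows essentially the same route as the paper's proof: both reduce the claim to the character-orthogonality identity $\frac{1}{p^m}\sum_{\bsk \in \KK(\CC)} \wal_{\bsk}(\bsx_n)\overline{\wal_{\bsk}(\bsx_r)} = \delta_{n,r}$ by using the bijection $\bsk \mapsto \CC\cdot\bsk$ from $\KK(\CC)$ onto $\FF_p^m$ and the identity $\wal_{\bsk}(\bsx_n) = \omega_p^{\vec{n}\cdot\vec{h}}$, then summing the resulting geometric sums over $\FF_p^m$. The only cosmetic difference is that the paper first combines the two Walsh factors into $\wal_{\bsk}(\bsx_n \ominus \bsx_r)$ before converting to the exponential form, whereas you convert each factor separately; the computations are identical.
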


\begin{proof}
The sum on the right of the above equation is 
\begin{align*}
\sum_{\bsk \in \KK(\CC)} \widetilde{c}(\bsk) \wal_{\bsk}(\bsx_n) & =
\sum_{v = 0}^{p^m-1} c_v \frac{1}{p^m} \sum_{\bsk \in \KK(\CC)}
\overline{\wal_{\bsk}(\bsx_v)} \wal_{\bsk}(\bsx_n) \\ 
& = \sum_{v
= 0}^{p^m-1} c_v \frac{1}{p^m} \sum_{\bsk \in \KK(\CC)}
\wal_{\bsk}(\bsx_n \ominus \bsx_v).
\end{align*}
The definition of the Walsh function and the net imply that  $\wal_{\bsk}(\bsx_n \ominus \bsx_v) =
\wal_{\bsk}(\bsx_{n \ominus v}) = \omega_p^{(\vec{n} \ominus
\vec{v}) \cdot \vec{h}}$, where $\bsk \in \DD(\vec{h}) \cap
\KK(\CC)$. Thus we have
$$\frac{1}{p^m} \sum_{\bsk \in \KK(\CC)} \wal_{\bsk}(\bsx_n \ominus
\bsx_v) = \frac{1}{p^m} \sum_{\vec{h} \in \FF_p^m}
\omega_p^{(\vec{n} \ominus \vec{v}) \cdot \vec{h}}$$ and the last
sum is $1$ if $\vec{n} = \vec{v}$, i.e. $n = v$, and $0$ otherwise.
Hence the result follows.
\end{proof}

The above lemma describes the inversion of the discrete Walsh
transform which we might call the inverse discrete Walsh transform.
As for the discrete Walsh transform, there is also a fast inversion
of the discrete Walsh transform which we describe in the following.

\begin{algorithm}[Fast Inverse Discrete Walsh Transform (FIWT)]\label{alg_ifwt}
For $(h_0,\ldots, h_{m-1} \in \FF_p$ we define
$D^{(0)}(h_0,\ldots,h_{m-1}) = \widetilde{c}(\bsk)$ where $\bsk \in
\DD(\vec{h}) \cap \KK(\CC)$. Then for $r =
1,2,\ldots, m$ compute for all $h_r,\ldots, h_{m-1} \in \FF_p$ and
all $n_0,\ldots,n_{r-1} \in \FF_p$ the sums
\begin{multline*}
D^{(r)}(n_0,\ldots,n_{r-1},h_r,\ldots,h_{m-1}) \\ =
\sum_{h_{r-1}=0}^{p-1} \omega_p^{h_{r-1} n_{r-1}}
D^{(r-1)}(n_0,\ldots,
n_{r-2},h_{r-1},h_r,\ldots,h_{m-1}).
\end{multline*}
Then for $n = 0,\ldots, p^m-1$ with $n = n_0 + \cdots + n_{m-1}
p^{m-1}$ let
$$c_n = D^{(m)}(n_0,\ldots, n_{m-1}).$$
\end{algorithm}

Hence $c_0,\ldots, c_{p^m-1}$ can also be computed from
$(\widetilde{c}(\bsk))_{\bsk \in \KK(\CC)}$ in $\Order(m p^{m+1})$
operations. As with the FWT algorithm, this number of operations can be further reduced to $\landau(m p^{m} \log p)$ if we apply Rader's algorithm. Thus the spline interpolation at a point, $Sf(\bsx)$, can be computed in
$\landau(m p^{m} \log p)$ operations, and each additional point can be computed in $\landau(p^{m})$ operations.

\subsection{Best possible interpolation}

Splines as defined above provide optimal interpolation of a function in at least two senses.  The spline approximation is the smallest function in $\HH_K$ that interpolates the data:
\[
Sf = \min_{\substack{g \in \HH_K \\ g(\bsx_i)=f(\bsx_i),\ i=0, \ldots, p^m}} \norm[\HH_K]{g}.
\]
Moreover, the spline algorithm is the best linear algorithm, i.e., algorithm of the form $Af = \sum_{i=0}^{n-1} f(\bsx_i) w_i(\bsx)$ for any choice of the $w_i(\bsx)$:
\[
Sf(\bsx) = \argmin_{Af} \sup_{\norm[\HH(K)]{f} \le 1} \left\lvert f(\bsx) -  Af(\bsx) \right\rvert.
\]
The proofs of these assertions are contained in \cite[Chapter 18]{Fas07a} and elsewhere. 

\section{ANOVA Decomposition of the Interpolation}\label{sec_anova}

Based on the spline interpolation of a function, one can also approximate its analysis of variance
%The spline interpolation algorithm based not only can approximate a function, but can also approximate its analysis of variance
 (ANOVA) effects, i.e., the pieces of the function depending on a subset of the $s$ variables.  This provides a way to estimate the truncation and superposition dimensions of functions via their spline interpolations.

Let $1:s$ denote the set of coordinate indices, $\{1,\ldots,s\}$, for short.  Let $u$ denote a subset of $1:s$ and let $\bar{u}$ denote $1:s \setminus u$. For any $\bsx \in [0,1]^s$, let $\bsx_u = (x_j)_{j \in u}$ denote the vector of coordinates indexed by $u$.  The ANOVA decomposition \cite{ES,Sob} of a function $f:[0,1]^s \longrightarrow \mathbb{R}$, is denoted
\begin{subequations} \label{ANOVAdef}
\begin{equation}
f(\bsx) = \sum_{u \subseteq 1:s} f_u(\bsx_{u}),
\end{equation}
where the ANOVA effect, $f_u$, is defined recursively by taking the integral over $[0,1]^{\bar{u}}$ and then subtracting the lower order effects:
\begin{equation}
f_{\emptyset} = \int_{[0,1]^{s}} f(\bsx) \,
\rd \bsx, \qquad  f_{u}(\bsx_u) = \int_{[0,1]^{\bar{u}}}f(\bsx)
\, \rd \bsx_{\bar{u}} - \sum_{v\subset u} f_{v}(\bsx_v),
\end{equation}
\end{subequations}
We emphasize that $\subset$ on the right side of this last equation denotes the proper subset. Also, $[0,1]^{u}$ denotes the Cartesian product of $|u|$ copies of $[0,1]$, where $|u|$ is the cardinality of $u$.

The ANOVA effects, $f_u$, of the function $f \in \HH_K$ lie in subspaces, $\HH_{K_u}$ for kernels constructed appropriately.  The kernels $K_u$ are products of a univariate kernel, $K'$.

The kernel $K'$ for univariate functions is defined as
\begin{equation*}
K'(x,y) = K'(x\ominus y,0) = \sum_{k=1}^\infty \widehat{K'}(k)\mathrm{wal}_k (x \ominus
y),
\end{equation*}
where the Walsh coefficients of the kernel must be non-negative.  One reasonable choice is 
\[
\widehat{K'}(k) =  \frac{p^{\alpha} - p}{p^{\alpha}(p-1)} p^{-\alpha a}
\]
for $k = k_0 + k_1 p + \cdots + k_a p^a \in \nat$ with $k_a \neq 0$, and where $\alpha > 1$ is a parameter that measures the digital smoothness of the kernel.  These Walsh coefficients have been normalized so that 
\begin{equation*}
K'(x,x) = K'(0,0) = \sum_{k=1}^\infty \widehat{K'}(k) = 1.
\end{equation*}
Because $K'$ does not include the constant function, i.e., $\widehat{K'}(0)=0$ implicitly, it follows that $\int_0^1 K'(x,y) \, \rd y = 0$.
A computable short form of $K'$ can be obtained \cite{DP03a},
namely for $x = x_1 p^{-1} + x_2 p^{-2} + \cdots $ and $y = x_1
p^{-1} + x_2 p^{-2} + \cdots + x_{i-1} p^{-i+1} + y_{i} p^{-i} + y_{i+1} p^{-i-1} + \cdots $ with $y_i \ne x_i$
we have
\begin{equation*}
K'(x,y) = K'(x\ominus y,0) = 
1 - p^{i(1 - \alpha)}\frac{p^{\alpha}-1}{p-1}.
\end{equation*}

The kernel for functions of $s$ variables is a product involving $K'$, namely,
\begin{align*}
K(\bsx,\bsy) &= K(\bsx \ominus \bsy, \bszero ) = \sum_{\bsk\in \nat_0^s}
\widehat{K}(\bsk) \mathrm{wal}_{\bsk}(\bsx \ominus
\bsy) \\
&= \prod_{j=1}^s [1 + \gamma_j K'(x_j,y_j)]  = \prod_{j=1}^s [1 + \gamma_j K'(x_j \ominus y_j,0)] \\
&= \sum_{u\subseteq 1:s} \gamma_u K_u(\bsx_u , \bsy_u) = \sum_{u\subseteq 1:s} \gamma_u K_u(\bsx_u \ominus \bsy_u,\bszero),
\intertext{where the tuning parameter $\gamma_u$ has a product form,}
\gamma_u &= \prod_{j \in u} \gamma_j, \qquad \gamma_{\emptyset}=1, 
\intertext{and the Walsh coefficients of these multivariate kernels have expressions in terms of the Walsh coefficients of $K'$:}
\widehat{K}(\bsk) & = \prod_{j=1}^s [\delta_{k_j,0} + \gamma_j \widehat{K'}(k_j)] = \sum_{u \subseteq 1:s} \gamma_u \widehat{K_u}(\bsk_u) \delta_{\bsk_{\bar{u},\bszero}}, \\
K_u(\bsx_u , \bsy_u) &=  K_u(\bsx_u \ominus \bsy_u,\bszero)  = \sum_{\bsk_u \in \nat_0^u}
\widehat{K_u}(\bsk_u) \mathrm{wal}_{\bsk_u}(\bsx \ominus
\bsy) \\
&= \prod_{j \in u} K'(x_j,y_j) = \prod_{j \in u} K'(x_j \ominus y_j,0), \\
\widehat{K_u}(\bsk_u) & = \prod_{j \in u} \widehat{K'}(k_j), \qquad \widehat{K_{\emptyset}}=1.
\end{align*}

The spline approximation via this kernel may be expressed as the sum of its ANOVA effects:
\begin{equation*}
Sf(\bsx) = \sum_{n=0}^{p^m-1} c_n
K(\bsx , \bsx_{n})= \sum_{n=0}^{p^m-1} c_n
K(\bsx \ominus \bsx_{n},\bszero) = \sum_{u\subseteq 1:s} (Sf)_u(\bsx_u), 
\end{equation*}
where these ANOVA effects are written in terms of the kernels $K_u$:
\begin{equation}\label{eq_Sfu}
(Sf)_u(\bsx_u) =  \gamma_u \sum_{n=0}^{p^m-1} c_n K_u (\bsx_u
, \bsx_{n,u}) = \gamma_u \sum_{n=0}^{p^m-1} c_n K_u (\bsx_u
\ominus \bsx_{n,u}, \bszero).
\end{equation}
These $(Sf)_u$ may be verified as the ANOVA effects of $Sf$ defined in \eqref{ANOVAdef} by noting that $\int_0^1 K'(x,y) \, \rd y = 0$ for all $x$.

The special form of the reproducing kernel defined here facilitates the calculation of the variance of $(Sf)_u$, denoted $\sigma^2((Sf)_u)$.  Noting that for $|u| >0$, $(Sf)_u$ has zero mean, it follows that
\begin{align*}
\sigma^2((Sf)_u) & =  \int_{[0,1]^{u}} \left[Sf_u(\bsx_u) -
\int_{[0,1]^{u}} Sf_u(\bsx'_u) \rd \bsx'_u \right]^2 \rd \bsx_u \\
& = \int_{[0,1]^{u}} [Sf_u(\bsx_u)]^2 \rd \bsx_u\\
& = \gamma_u^{2} \sum_{n,v = 0}^{p^m-1} c_n
c_v \int_{[0,1]^{u}} K_u(\bsx_u,\bsx_{n,u})
K_u(\bsx_u,\bsx_{v,u}) \, \rd \bsx_u \\ 
& = \gamma_u^{2}
\sum_{n,v = 0}^{p^m-1} c_n c_v \prod_{j \in u} \int_0^1
K'(x_j,x_{n,j}) K'(x_j,x_{v,j}) \, \rd x_j.
\end{align*}

Substituting the Walsh expansions of the univariate kernels in the above integral and noting that the Walsh functions are orthogonal yields an expression for the integral in terms of a related univariate kernel:
\[
\int_0^1 K'(x_j,x_{n,j}) K'(x_j,x_{v,j}) \,\rd
x_j = R'(x_{n,j},x_{v,j}),
\]
where
\begin{align*}
R'(x,y) &= R'(x\ominus y,0) =\sum_{k=0}^\infty \widehat{R'}(k) \wal_k(x \ominus y), \qquad  \widehat{R'}(k) = \left [\widehat{K'}(k) \right]^2,\\
\widehat{R'}(0) &= 0 \quad \text{and} \quad \widehat{R'}(k) = \left[\frac{p^{\alpha} - p}{p^{\alpha}(p-1)}\right]^2 p^{-2\alpha a}
\end{align*}
for $k = k_0 + k_1 p + \cdots + k_a p^a \in \nat$ with $k_a \neq 0$.
Moreover, as is the case for $K'$, there is a computationally simple form for $R'$ as well.  For $x = x_1 p^{-1} + x_2 p^{-2} + \cdots $ and $y = x_1
p^{-1} + x_2 p^{-2} + \cdots + x_{i-1} p^{-i+1} + y_{i} p^{-i} + y_{i+1} p^{-i-1} + \cdots $ with $y_i \ne x_i$
we have
\begin{equation*}
R'(x,y) = R'(x\ominus y,0) = \frac{(p^{\alpha} - p)^2}{(p-1)(p^{2\alpha} - p)} \left[1 - p^{i(1 - 2\alpha)}\frac{p^{2\alpha}-1}{p-1} \right].
\end{equation*}
The product of the univariate kernels $R'$ is then used to define the kernels $R_u$:
\begin{align*}
R_u(\bsx_u , \bsy_u) &=  R_u(\bsx_u \ominus \bsy_u,\bszero)  = \prod_{j \in u} R'(x_j,y_j) = \sum_{\bsk\in \nat_0^u}
\widehat{R_u}(\bsk_u) \mathrm{wal}_{\bsk_u}(\bsx_u \ominus
\bsy_u), \\
\widehat{R_u}(\bsk_u) & = \prod_{j \in u} \widehat{R'}(k_j).
\end{align*}
This definition allows the variance of $(Sf)_u$ to be written in terms of the kernel $R_u$: 
\[
\sigma^2((Sf)_u) = \gamma_u^{2}
\sum_{n,v = 0}^{p^m-1} c_n c_v R_u(\bsx_{n,u}, \bsx_{v,u}).
\]

The sum above may be evaluated naively using $\landau(p^{2m})$ operations.  However, using the FWT allows evaluation with only $\landau(m p^{m} \log p)$ operations.  First, the spline coefficients, $c_n$ and $c_v$ are written in terms of their inverse discrete Walsh transform coefficients via Lemma \ref{idwtlem}:
\begin{align*}
\sigma^2((Sf)_u) &= \gamma_u^{2}
\sum_{n,v = 0}^{p^m-1} \sum_{\bsk \in \KK(\CC)} \widetilde{c}(\bsk)
\wal_{\bsk}(\bsx_n) \sum_{\bsl \in \KK(\CC)} \overline{\widetilde{c}(\bsl)}
\overline{\wal_{\bsl}(\bsx_v)} R_u(\bsx_{n,u}, \bsx_{v,u}) \\
&= \gamma_u^{2}
\sum_{\bsk, \bsl \in \KK(\CC)} \widetilde{c}(\bsk) \overline{\widetilde{c}(\bsl)} \sum_{n,v = 0}^{p^m-1} R_u(\bsx_{n,u} \ominus \bsx_{v,u},\bszero) \wal_{\bsk}(\bsx_n) \overline{\wal_{\bsl}(\bsx_v)}.
\end{align*}
The double sum of the kernel $R_u$ may be further simplified by applying certain elementary properties of Walsh functions for any $\bsk,\bsl \in \KK(\CC)$:
\begin{multline*}
\sum_{n,v = 0}^{p^m-1} R_u(\bsx_{n,u} \ominus \bsx_{v,u},\bszero) \wal_{\bsk}(\bsx_n) \overline{\wal_{\bsl}(\bsx_v)} \\
= \sum_{n,v = 0}^{p^m-1} R_u(\bsx_{n,u} \ominus \bsx_{v,u},\bszero) \wal_{\bsk}(\bsx_n \ominus \bsx_{v,u}) \wal_{\bsk \ominus \bsl}(\bsx_v) \\
= \sum_{\bsx \in P(\CC)} R_u(\bsx_{u},\bszero) \wal_{\bsk}(\bsx) \sum_{n,v = 0}^{p^m-1}\wal_{\bsk \ominus \bsl}(\bsx_v) = p^{2m} \delta_{\bsk, \bsl} \widetilde{R_u}(\bsk)
\end{multline*}
where the DWT of $R_u$ is
\[
\widetilde{R_u}(\bsk) = \frac{1}{p^m} \sum_{\bsx \in P(\CC)} R_u(\bsx_{u},\bszero) \wal_{\bsk}(\bsx).
\]
Substituting the formula for the double sum of the kernel $R_u$ in terms of its DWT yields 
\begin{equation} \label{sig2sfu}
\sigma^2((Sf)_u) = \gamma_u^{2}
p^{2m} \sum_{\bsk \in \KK(\CC)} \left \lvert \widetilde{c}(\bsk) \right \rvert^2 \widetilde{R_u}(\bsk).
\end{equation}
Since the DWT of the spline coefficients and $R_u$ may each be calculated in $\landau(m p^{m} \log p)$ operations by the FWT algorithm, it follows that each $\sigma^2((Sf)_u)$ may be calculated in $\landau(m p^{m} \log p)$ operations.

Calculating the variances of all the $Sf_u$ may be too burdensome, since there are $2^s$ ANOVA effects.  However, sums of the  $\sigma^2((Sf)_u)$ are useful for determining the effective dimension of $f$ \cite{CMO}, which is an
important factor in the performance of quasi-Monte Carlo methods.

The truncation variance of order $d$, denoted $\sigma^2_{\trc}(f;d)$ is defined as sum of the variances of all ANOVA effects involving the first $d$ or fewer variables:
\[
\sigma^2_{\trc}(f;d) = \sum_{u \subseteq 1:d} \sigma^{2}(f_u).
\]
The superposition variance of order $d$, denoted $\sigma^2_{\sup}(f;d)$ is defined as sum of the variances of all ANOVA effects involving $d$ or fewer variables:
\[
\sigma^2_{\sup}(f;d) = \sum_{0< |u| \leq d} \sigma^{2}(f_u).
\]
It follows from these two definitions that the truncation variance is no more than the superposition variance, and for all $d$ between $0$ and $s$, 
\begin{multline*}
0 = \sigma^2_{\trc}(f;0) = \sigma^2_{\sup}(f;0) \le \sigma^2_{\trc}(f;d) \le \sigma^2_{\sup}(f;d) \\
\le \sigma^2_{\trc}(f;s) = \sigma^2_{\sup}(f;s) = \sigma^2(f).
\end{multline*}
The truncation dimension, $d_{\trc}$, and the superposition dimension, $d_{\sup}$ are defined as the smallest dimensions for which the truncation and superposition variances, respectively, are $99\%$ of the total variance of the function, i.e., 
\begin{gather*}
\sigma^2_{\trc}(f;d_{\trc} - 1) < 0.99 \sigma^2(f) \le \sigma^2_{\trc}(f;d_{\trc}), \\
\sigma^2_{\trc}(f;d_{\sup} - 1) < 0.99 \sigma^2(f) \le \sigma^2_{\sup}(f;d_{\sup}).
\end{gather*}

The truncation and superposition dimensions of a function may be estimated by the truncation and superposition dimensions of their spline approximations.  To do this requires computationally efficient formulas for the truncation and superposition variances.  The truncation variance may be written as
\begin{align*}
\sigma^2_{\trc}(Sf;d) &= \sum_{u \subseteq 1:d} \sigma^{2}((Sf)_u)
= \sum_{u \subseteq 1:d} \gamma_u^{2}
p^{2m} \sum_{\bsk \in \KK(\CC)} \left \lvert \widetilde{c}(\bsk) \right \rvert^2 \widetilde{R_u}(\bsk) \\
& = p^{2m} \sum_{\bsk \in \KK(\CC)} \left \lvert \widetilde{c}(\bsk) \right \rvert^2 \widetilde{R_d}(\bsk),
\intertext{where $\widetilde{R_d}$ is the DWT of the kernel $R_d$ defined as}
R_d(\bsx,\bsy) &= R_d(\bsx \ominus \bsy, \bszero )  \\
&= \sum_{u\subseteq 1:d} \gamma_u^{2} R_u(\bsx_u , \bsy_u) = \sum_{u\subseteq 1:d} \gamma_u^{2} R_u(\bsx_u \ominus \bsy_u,\bszero) \\
&= \prod_{j=1}^d [1 + \gamma^2_j R'(x_j,y_j)]  = \prod_{j=1}^d [1 + \gamma^2_j R'(x_j \ominus y_j,0)].
\end{align*}

By convention $R(\bsx,\bsy)$ is defined as $R_s(\bsx,\bsy)$.  Note that the total variance of the spline approximation to $f$ is given by taking $d=s$ above:
\begin{equation}\label{varSf}
\sigma^2(Sf) = p^{2m} \sum_{\bsk \in \KK(\CC)} \left \lvert \widetilde{c}(\bsk) \right \rvert^2 \widetilde{R}(\bsk) = p^{2m} \sum_{\bsk \in \KK(\CC)} \lvert \widetilde{f}(\bsk) \rvert^2 \frac{\widetilde{R}(\bsk)}{\lvert\widetilde{K}(\bsk) \rvert^2},
\end{equation}
where \eqref{splinecoef} has been used.

For $\gamma_j$ of the form $\beta\tilde{\gamma}_{j}$ and thus $\gamma_u = \beta^{|u|} \tilde{\gamma}_u$ the DWT $\widetilde{R}(\bsk)$ may be written as an $s$-degree polynomial in $\beta^2$ with vanishing constant term:
\begin{align*}
R(\bsx,\bsy) &= R(\bsx \ominus \bsy, \bszero ) = \sum_{j=1}^s \beta^{2j}  \sum_{\substack{u\subseteq 1:s \\ \lvert u \rvert=j}} \tilde{\gamma}_u^{2} R_u(\bsx_u \ominus \bsy_u,\bszero), \\
\widetilde{R}(\bsk) &= \sum_{j=1}^s \beta^{2j} \widetilde{Q}(\bsk,j) , 
\intertext{where the $\widetilde{Q}(\bsk,j)$ are the coefficients of this polynomial in $\beta^2$ and are given by}
\widetilde{Q}(\bsk,j) &= \tilde{\gamma}_u^{2} \sum_{\substack{u\subseteq 1:s \\ \lvert u \rvert=j}} \widetilde{R_u}(\bsk).
\end{align*}
The coefficients $\widetilde{Q}(\bsk,j)$ may be obtained by evaluating $\widetilde{R}(\bsk)$ at $s$ different values of $\beta^2$, and then performing polynomial interpolation, a relatively inexpensive procedure requiring $\landau(s^2)$ operations.  These $\widetilde{Q}(\bsk,j)$ may then be used to evaluate the superposition variance as follows:
\begin{align*}
\sigma^2_{\sup}(f;d) &= \sum_{0< |u| \leq d} \sigma^{2}((Sf)_u)
= \sum_{0< |u| \leq d} \gamma_u^{2}
p^{2m} \sum_{\bsk \in \KK(\CC)} \left \lvert \widetilde{c}(\bsk) \right \rvert^2 \widetilde{R_u}(\bsk) \\
& = p^{2m} \sum_{\bsk \in \KK(\CC)} \left \lvert \widetilde{c}(\bsk) \right \rvert^2 \sum_{0< |u| \leq d} \beta^{2|u|} \tilde{\gamma}_u^2 \widetilde{R_u}(\bsk) \\
& = p^{2m} \sum_{\bsk \in \KK(\CC)} \left \lvert \widetilde{c}(\bsk) \right \rvert^2 \sum_{j=1}^d  \beta^{2j}\widetilde{Q}(\bsk,j) \\
& = \sigma^2_{\sup}(f;d-1) + p^{2m} \beta^{2d} \sum_{\bsk \in \KK(\CC)} \left \lvert \widetilde{c}(\bsk) \right \rvert^2 \widetilde{Q}(\bsk,d).
\end{align*}
Thus, both the truncation and superposition dimensions may be evaluated via the FWT algorithm using $\landau(s^2 m p^m \log(p))$ operations.

\section{Numerical Results}

\subsection{Optimization of the Kernel Parameters}
The $\gamma_j$ are weights in the ANOVA decomposition of the kernel and $\alpha$ controls the rate of decay of Fourier coefficients of the kernel.  Their optimal values for spline interpolation depend on the particular function $f$ to be interpolated. Assume that $p=2$, and assume that we have a digital net with $2N$ points, $\{\xx_0, \ldots, \xx_{2N-1}\}$, whose first $N$ points themselves constitute a digital net, as do the second $N$ points. We can construct the spline interpolant by the values of $f(\xx_n)$ for $n=0, \ldots, N-1$, and then estimate the error of the spline by the cost function
 \begin{equation} \label{splineCostFunc}
     \sum_{n=N+1}^{2N-1}(f(\xx_n) - Sf(\xx_n))^2.
 \end{equation} 
The values of $Sf(\xx_n)$ for $n=N,\ldots,2N-1$ can be evaluated by a fast discrete Walsh transform.
 There are $s$ optimization parameters $\gamma_j$ and the optimization process becomes slow when $s$ increases. In order to
 reduce the number of optimization parameters, the values of 
 $\gamma_j$ are assumed to be of the following form in this section,
 \begin{equation*}
 \gamma_j = \beta j^{q}
 \end{equation*}
 where $q$ is a new optimization parameter. Thus, for any
 $s$, there are only three optimization parameters: $\beta$, $\alpha$
 and $q$.
 Their optimal values in Sections \ref{sec:effDimMultiFunc} and \ref{sec:effDimAsianOption} were found by optimizing (\ref{splineCostFunc}) using the function \texttt{fminsearch} in MATLAB.

\subsection{Effective Dimension of Multiplicative Functions}
\label{sec:effDimMultiFunc}
Consider a class of test functions in \cite{WangFang2003}
\begin{equation*}
f(\bsx) = \prod_{k=1}^{s} \frac{|4x_k -2| + a_k}{1 + a_k},
\end{equation*}
where $a_k$ are parameters. We consider three possible choices
of $a_k$: $a_k = 1$, $a_k = k$ and $a_k = k^2$ for $k = 1\ldots s$. 
The effective dimension in both senses of these
functions can be computed analytically, or estimated by the algorithm in \cite{WangFang2003}. We compare
them with the effective dimensions of the interpolating spline as described in the previous section.
Table \ref{tab:MultiFuncEffDim} shows the results using a Sobol
point set with $m=12$ and $p = 2$, so $N=4096$.

\begin{table}
\begin{center}
\begin{tabular}{c|c|ccc|ccc}
$a_k$ & $s$ & \multicolumn{3}{c|}{$d_{\trc}$} & \multicolumn{3}{c}{$d_{\sup}$} \\
 & & Exact & Wang \& Fang & Spline & Exact & Wang \& Fang &
Spline
 \\ \hline
           & 10 & 10 & 10 & 10 & 3 & - & 2 \\
$1$  & 20 & 20 & 20 & 20 & 5 & - & 2 \\
           & 40 & 40 & 39 & 40 & 8 & - & 2 \\ \hline
           & 10 & 10 & 10 & 10 & 2 & -- & 2 \\
$k$  & 20 & 18 & 19 & 18 & 2 & -- & 2 \\
           & 40 & 33 & 34 & 31 & 2 & -- & 2 \\ \hline
           & 10 & 5 & 5 & 5 & 2 & - & 2 \\
$k^2$& 20 & 5 & 5 & 5 & 2 & - & 2 \\
           & 40 & 5 & 5 & 5 & 2 & - & 2 \\
\end{tabular}
\end{center}
\caption{The effective dimensions of the multiplicative functions}
\label{tab:MultiFuncEffDim}
\end{table}

Note that $d_{\sup}$ cannot be calculated by the method in
\cite{WangFang2003}. The spline method described in the previous section can estimate both $d_{\sup}$ and
$d_{\trc}$ well in all cases except for $d_{\sup}$ in the case $a_k = 1$ and $s=20,40$. Table~\ref{tab:MultiFuncVar} shows $\sigma^2(f)$ by
analytical calculation and two approximations: $\sigma^2_{\mathrm{QMC}}(f) =
\frac{1}{N}\sum_{n=0}^{N-1}f^2(\bsx_n) -
(\frac{1}{N}\sum_{n=0}^{N-1}f(\bsx_n))^2$ and  $\sigma^2(Sf)$ by \eqref{varSf}.  Table~\ref{tab:MultiFuncVar} also shows the ratio $\sigma^2(Sf) / \sigma^2_{\mathrm{QMC}}(f)$ as an indicator of how well these two approximations agree. One can prove that $\sigma^2(Sf) / \sigma^2_{\mathrm{QMC}}(f) \leq 1$. It is found that the values of
$\sigma^2(Sf) / \sigma^2_{\mathrm{QMC}}(f)$ are close to unity for most cases but  are extremely small for  $a_k = 1,
s=20,40$. It was reported in \cite{WangHickernell2000} that the errors of the numerical integration by the QMC methods were also large for these latter cases. 

\begin{table}
\begin{center}
\begin{tabular}{c|c|cccc}
$a_k$ & $s$ & $\sigma^2(f)$ & $\sigma^2_{\mathrm{QMC}}(f)$ & $\sigma^2(Sf)$
&
$\sigma^2(Sf) / \sigma^2_{\mathrm{QMC}}(f)$ \\ \hline
           & 10 & 1.2265 & 1.9990 & 1.0434 & 0.5220   \\
$1$  & 20 & 3.9573 & $2.7 \times 10^3$ & 0.2692 & $9.9704 \times
10^{-5}$ \\
           & 40 & 23.5745 & $2.98 \times 10^{10}$ & 0.2393 & $8.0302
\times 10^{-12}$ \\ \hline
           & 10 & 0.1992 & 0.2025 & 0.1960 & 0.9679 \\
$k$  & 20 & 0.2154 & 0.2340 & 0.2073 & 0.8859 \\
           & 40 & 0.2246 & 0.3134 & 0.2088 & 0.6662 \\ \hline
           & 10 & 0.1038 & 0.1039 & 0.1037 & 0.9981 \\
$k^2$& 20 & 0.1039 & 0.1040 & 0.1038 & 0.9981 \\
           & 40 & 0.1039 & 0.1041 & 0.1038 & 0.9971 \\
\end{tabular}
\end{center}
\caption{The variances of the multiplicative functions estimated by a
QMC method and the variances of the splines.}
\label{tab:MultiFuncVar}
\end{table}

\subsection{Asian Option Pricing}
\label{sec:effDimAsianOption}
The pricing of an Asian option is based on the arithmetic average of
the
stock prices in a particular period of time. The payoff of the call
option at the end of period $T$ is
\begin{equation*}
    \text{payoff} = \max\left(\frac{1}{s} \sum_{j=1}^{s} S_{j} - K,0 \right),
\end{equation*}
 where $S_{j}$ is the stock price at time $t_j=jT/s, j=0,\ldots,s$, $t_s = T$ and
$K$ is the strike price. Based on the risk-neutral valuation
principle, the price of the call option at $t=0$ should be
\begin{equation*}
E(\de^{-rT} \text{payoff}),
\end{equation*}
where $E(\cdot)$ is the expected value of different path movements
of the stock price and $r$ is the risk-free interest rate. The stock
price movement is assumed to follow a geometric Brownian motion,
\begin{equation*}
S_{j} = S_{j-1} \de^{(r-0.5\sigma^2) T/s + \sigma \sqrt{T/s}
Z_j},
\end{equation*}
where $\sigma$ is the volatility, and
$Z_1, \ldots, Z_s$ are independent standard normal random variables.
See \cite{WS03b}  for the details.

\begin{table}
\begin{center}
\begin{tabular}{c|ccc|cc}
    & \multicolumn{3}{c|}{Wang \& Fang} & \multicolumn{2}{c}{Spline}
\\ \hline
$s$ & standard & BB & PCA & $d_{\trc}$ & $d_{\sup}$ \\ \hline
8 & 7 & 5 & 2 & 7 & 2 \\
16 & 14 & 7 & 2 & 14 & 2 \\
32 & 27 & 7 & 2 & 27 & 2 \\
\end{tabular}
\end{center}
\caption{The estimated effective dimensions of the Asian option
pricing
problem.}
\label{tab:AsiaOptEffDim}
\end{table}

Table~\ref{tab:AsiaOptEffDim} shows the results of the estimated
effective dimension in both senses for an Asian option pricing
problem with the following parameters: $N=2^{14}, S_0 = K = 100, \sigma=0.2,
r=0.1,$ and $T=1$  year using a Sobol point set. The
results in \cite{WangFang2003} are also duplicated. The column
``standard'' is the estimated truncation dimension of the original
pricing function by their method. The columns ``BB'' and ``PCA'' are
the estimated truncation dimensions when the Brownian bridge and principle component analysis dimension reduction methods are
applied, respectively. The $d_{\trc}$ of the spline are exactly the same
as the results in \cite{WangFang2003}. The $d_{\sup}$ are also the same
as the truncation dimension after applying PCA, which is the best
dimension reduction method in \cite{WangFang2003}. Moreover, the
superposition dimension $d_{\sup}=2$ is consistent with the analytical
results in \cite{WS03b}. The variances of the discounted payoff for the Asian
option pricing problem computed by the sample variance and the variance of the spline are shown in Table \ref{tab:AsiaOptVar},
which indicate that the accuracy of the spline for estimating effective dimension should be reasonably
good.

\begin{table}
\begin{center}
\begin{tabular}{c|ccc}
$s$ & $\sigma^2_{\mathrm{QMC}}(f)$ & $\sigma^2(Sf)$ &
$\sigma^2(Sf) / \sigma^2_{\mathrm{QMC}}(f)$ \\ \hline
8 & 70.3669 & 69.4148 & 0.9865 \\
16 & 71.6402 & 69.0217 &  0.9634 \\
32 & 72.6395 & 67.8101 &  0.9335 \\
\end{tabular}
\end{center}
\caption{The variances of the Asian option price problem estimated by
a QMC method and the variances of the splines.}
\label{tab:AsiaOptVar}
\end{table}

\section{Conclusion and Remarks}
A fast discrete Walsh transform over digital nets is derived in
this article. This transform can be applied to reduce the cost of 
calculating the discrete Walsh coefficients  from $\Order(N^2)$
to $\Order(N \log N)$. Because the kernel used is piecewise constant, the spline interpolant based on this kernel may not be particularly accurate.  However, the spline interpolant does facilitate the estimation of coarser quantities, such as the variances of the ANOVA effects and the effective dimension of the function. The numerical results for two different families of functions are compared with the analytical results and the results from the literature showing that the estimation of effective dimension via the spline is accurate and efficient.

Finally, we would like add two remarks. The function values in our interpolation method were sampled over a digital net, which was originally constructed for numerical quadrature. Sparse grids \cite{Bungartz} are another sampling scheme and approximation method that can be applied to both numerical quadrature and function interpolation. Both methods can be classified as \textit{a priori} grid optimization \cite{Bungartz}. However, the point sets are based on different criteria. Digital nets minimize the discrepancy of the point set \cite{N92}, while the sparse grid is selected based on the importance of the components in a tensor product of hierarchical function spaces. Sparse grids use $\Order(h^{-1} \, (\log h^{-1})^{s-1})$ points where $h$ is the mesh size in each dimension. This number is substantially smaller than $\Order(h^{-s})$, the number of points needed for an ordinary grid, but the number of points for a sparse grid increases exponentially with dimension for a given mesh size. On the other hand there exist digital nets with $N=p^m$ points for $m=0, 1, \ldots$, independent of the dimension, $s$.  Error bounds have been developed for sparse grid methods. An error analysis of the spline algorithm for digital nets is the object of future work.

The second remark is that the fast algorithms in this paper are due to the correspondence between the point set and the kernel. In \cite{ZengLeungHickernell2004}, a similar technique was applied to integration lattices and the fast Fourier transform (FFT). For computer experiments, one typically has control over where to sample the underlying function, but for some problems of approximating a function based on observational or field data, the locations of the points where the function is sampled can not be selected in advance. Such problems are suited to the Nonequidistant Fast Fourier Transform (NFFT) pioneered by Potts \cite{Potts}. The NFFT provides a fast and relatively accurate, but only approximate, discrete Fourier transform of the data.  The FFT applied to data sampled on an integration lattice and the Fast Walsh Transform (FWT) introduced here applied to data sampled on digital nets both provide fast discrete transforms with accuracy only limited by machine precision.
%Inspired by NFFT, a nonequispaced fast Walsh transform will be the further work of this paper.

\bibliographystyle{amsplain}

\begin{thebibliography}{99}
\bibitem{aron} N. Aronszajn, Theory of reproducing kernels. Trans. Amer. Math. Soc., 68, 337--404, 1950.

\bibitem{Bungartz} H. Bungartz and M. Griebel, Sparse grids. Acta Numer., 13, 147--269,  2004.


\bibitem{CMO} R. Caflisch, R. Morokoff and A. Owen, Valuation of
mortgage backed securities using Brownian bridges to reduce the
effective dimension. J. Comput. Finance, 1, 27-46, 1997.

\bibitem{chrest} H.E. Chrestenson, A class of generalized Walsh
functions. Pacific J. Math., 5, 17--31, 1955.

\bibitem{CT} J.W. Cooley and J.W. Tukey, An algorithm for the
machine calculation of complex Fourier series. Math. Comp., 19,
297--301, 1965.

\bibitem{DP03a} J. Dick and F. Pillichshammer, Multivariate
integration in weighted Hilbert spaces based on Walsh functions and
weighted Sobolev spaces. J. Complexity, 21, 149--195, 2005.

\bibitem{ES} B. Efron and C. Stein, The Jackknife estimate of
variance. Ann. Statist., 9, 586--596, 1981.

\bibitem{Fas07a} G. E. Fasshauer, Meshfree Approximation Methods with {M}atlab, Interdisciplinary Mathematical Sciences Series, vol.\ 6, World Scientific  Publishing Co., Singapore, 2007.

\bibitem{fau82} H. Faure, Discr\'{e}pance de suites associ\'{e}es
\`{a} un syst\`{e}me de num\'{e}ration (en dimension $s$). Acta
Arith., 41, 337--351, 1982.

\bibitem{L93}
G. Larcher, A class of low-discrepancy point-sets and its
application to numerical integration by number-theoretical methods.
Grazer Math. Ber., 318, 69-80, 1993.

\bibitem{LT94} G. Larcher and C. Traunfellner, On the numerical
integration of Walsh series by number-theoretic methods. Math.
Comp., 63, 277--291, 1994.

\bibitem{LH} D. Li and F.J. Hickernell, Trigonometric spectral
collocation methods on lattices. Contemp. Math., 330, 121--132,
2003.

\bibitem{N86} H. Niederreiter, Low-discrepancy point sets.
Monatsh. Math., 102, 155--167, 1986.

\bibitem{N92} H. Niederreiter,  Random Number Generation and
Quasi-Monte Carlo Methods. No. 63 in CBMS-NSF Series in Applied
Mathematics. SIAM, Philadelphia, 1992.

\bibitem{nie05} H. Niederreiter, Constructions of $(t,m,s)$-nets and
$(t,s)$-sequences.  Finite Fields Appl., 11, 578--600, 2005.

\bibitem{NP01} H. Niederreiter and G. Pirsic, Duality for digital
nets and its applications. Acta Arith., 97, 173--182, 2001.

\bibitem{Potts} D. Potts, G. Steidl, and  M. Tasche, Fast Fourier transforms for nonequispaced data: A tutorial. In ``Modern Sampling Theory: Mathematics and Applications'' ed. J.J. Benedetto and P. Ferreira, 249--274, Birkh\"{a}user Boston, 2001.

\bibitem{Rader} C.M. Rader, Discrete Fourier transforms when the number of data samples is prime. Proc. IEEE, 56, 1107--1108,1968.

\bibitem{SJ94} I.H. Sloan and S. Joe, Lattice methods for
multiple integration. Oxford University Press, Oxford, 1994.

\bibitem{sob67} I.M. Sobol', On the distribution of points in a cube
and the approximate evaluation of integrals. U.S.S.R. Comput. Maths.
Math. Phys., 7, 86--112, 1967.

\bibitem{sob69} I.M. Sobol', Multidimensional quadrature formulas and
Haar functions. Nauka, Moscow, 1969. (In Russian.)

\bibitem{Sob} I.M. Sobol', Global sensitivity indices for nonlinear
mathematical models and their Monte Carlo estimates. Math. Comput.
Simulation, 55, 271--280, 2001.

\bibitem{whaba} G. Wahba, Spline models for observational
data. SIAM, Philadelphia, 1990.

\bibitem{walsh} J.L. Walsh, A closed set of normal orthogonal
functions. Amer. J. Math., 55, 5--24, 1923.

\bibitem{WangFang2003} X. Wang and K.T. Fang, The effective dimension
and quasi-Monte Carlo integration. J. Complexity, 19, 101--124,
2003.

\bibitem{WangHickernell2000} X. Wang and F.J. Hickernell, Randomized
Halton sequences. Math.\ Comput.\ Modelling, 32, 887--899, 2000.

\bibitem{WS03b} X. Wang and I.H. Sloan, Why are high-dimensional
finance problems often of low effective dimension? SIAM J. Sci.
Comput., 27, 159--183, 2005.

\bibitem{WS03a} X. Wang and I.H. Sloan, Efficient weighted lattice
rules with applications to finance.  SIAM J. Sci. Comput., 28,
728--750, 2006.

\bibitem{ZengLeungHickernell2004} X. Zeng, K.T. Leung and F.J.
Hickernell, Error Analysis of Splines for Periodic Problems Using
Lattice Design. MCQMC2004 (ed.\ Niederreiter and Talay), 501--514,
2006.

\bibitem{zin87}
P.~Zinterhof, \"Uber die schnelle L\"osung von hochdimensionalen
Fredholm-Gleichungen vom Faltungstyp mit zahlentheoretischen
Methoden (On the fast solution of higher-dimensional Fredholm
equations of convolution type by means of number-theoretic methods),
\"Osterreich.~Akad.~Wiss.~Math.-Natur.~Kl.~Sitzungsber.~II 196, no.
4-7, 159--169 (1987).

\end{thebibliography}

\end{document}